\newcommand{\R}{\mathbb{R}}
\newcommand{\RD}{{\mathbb{R}^2}}
\newcommand{\J}{J_{e,T}}
\renewcommand{\c}{c_{e,T}}
\renewcommand{\H}{H^1(\RD)}
\newcommand{\Hr}{H^1_r(\RD)}
\newcommand{\n }{\nabla }
\renewcommand{\a }{\alpha }
\newcommand{\de}{\partial}
\newcommand{\weakto}{\rightharpoonup}
\newcommand{\dis}{\displaystyle}
\numberwithin{equation}{section}
\newtheorem{theorem}{Theorem}[section]
\newtheorem{proposition}[theorem]{Proposition}
\newtheorem{lemma}[theorem]{Lemma}
\theoremstyle{definition}
\title[Standing waves for a Schr\"odinger-Chern-Simons-Higgs system]{Standing waves for a Schr\"odinger-Chern-Simons-Higgs system}
\author[P.\ d'Avenia]{Pietro d'Avenia}
\address{Dipartimento di Meccanica, Matematica e Management
\newline\indent 
Politecnico di Bari
\newline\indent
Via Orabona 4,  I-70125  Bari, Italy}
\email{\href{mailto:pietro.davenia@poliba.it}{pietro.davenia@poliba.it}}
\author[A. Pomponio]{Alessio Pomponio}
\address{Dipartimento di Meccanica, Matematica e Management
\newline\indent 
Politecnico di Bari
\newline\indent
Via Orabona 4,  I-70125  Bari, Italy}
\email{\href{mailto:alessio.pomponio@poliba.it}{alessio.pomponio@poliba.it}}
\thanks{}
\subjclass[2010]{
35Q40,  
35J20,  
35Q51. 
}
\keywords{Schr\"odinger-Chern-Simons-Higgs system, Standing waves, Interacting charged matter fields}
\thanks{The authors are partially supported by FRA2016 founds of Politecnico di Bari. 
The first author is also supported by the FFABR founds.}
\begin{document}

\maketitle

\begin{abstract}
We consider a system arising from a nonrelativistic Chern-Simon-Higgs model, in which a charged field is coupled with a gauge field. We prove an existence result for small coupling constants.
\end{abstract}
	
\section{Introduction}

In this paper we deal with the following system
\begin{equation}\label{sistema}
\begin{cases}
\displaystyle
-\frac{1}{2m}\Delta u 
+\omega u
+ \frac{e^4}{2m\kappa^2}\frac{h_{u}^{2}(|x|)}{|x|^{2}} u
+ \frac{e^{4}}{ m \kappa^{2}} u \int_{|x|}^{\infty}\frac{u^{2}(s)}{s} h_{u}(s) ds\\
\qquad\qquad\qquad\qquad\qquad\qquad\qquad\qquad
\displaystyle +\left(1+\frac{\kappa q}{2 m}\right)N u 
+ \frac{q}{4m^2} u^3=0, &\quad\hbox{ in }\RD,
\\[5mm]
\dis - \Delta N + \kappa^2 q^2 N + q\left(1+\frac{\kappa q}{2m}\right) u^2=0,&\quad\hbox{ in }\RD,
\end{cases}
\end{equation}
where $u,N:\R^2 \to \R$ are radially symmetric, 
\[
h_{u}(|x|)=
\int_{0}^{|x|}s u^{2}(s)ds,
\] 
and $m,\omega,e, \kappa,q$ are positive constants with a suitable physical meaning.

This system appears in $(2+1)$-dimensional Abelian Higgs models (see Section \ref{deduction} for some details) in which a charged nonrelativistic matter field interacts with a massive gauge field containing only a Chern-Simons term in the gauge action (see \cite{DT}).

In the last years, this type of systems received a great interest both from a mathematical and physical point of view
due, in particular, to the important role of the soliton-like solutions.

From a physical point of view, it provides a second-quantized description of a fixed number of nonrelativistic particles moving in $\delta$-function potentials 
and interacting with massive relativistic photons and neutral scalars.
Such models appear, for instance, as field-theoretic models for anyons, as effective theories for anyon superconductivity, as realistic models of fractional statistics (see \cite{JLW,JP1,JP2,JW,HKP,PK} and references therein).

From a mathematical point of view, nonlocal systems of this type have been object of intensive study in last years in two almost opposite directions.\\
In the first one, we can include the nonlocal Schr\"odinger-Chern-Simons equation, when there is no interaction with the neutral scalar filed $N$. We refer, for instance, to \cite{byeon,BHS2,cunha,DPW,huh2,JPR,P,AD,AD2,WT,Y}. In particular, in  \cite{byeon,AD},
the peculiar nature of the nonlocal term is deeply analysed.
\\
In \cite{HHJ2014}, on the contrary, the authors deal with the nonrelativistic Chern-Simon-Higgs model, in which a charged field is coupled with a gauge field, assuming it identically $0$. Therefore, on one hand, their system is simpler than the nonlocal Schr\"odinger-Chern-Simons equation due to the lack of the nonlocal Chern-Simons term; on the other hand they have to deal with a more involved two-variables energy functional. 
In order to do this, they first solve the second equation, for any $u$, fixed finding $N_u$, the unique solution, then they can treat a one-variable nonlocal energy functional and a sharp study of the properties of the map $u\mapsto N_u$ is required. 

Our aim is, therefore, to join these two almost opposite research directions and to treat \eqref{sistema} in its whole complexity. 
The novelty with respect to the {\it classical} Schr\"odinger-Chern-Simons systems
is the presence of a specific form of the Higgs potential, which modifies the geometry of the energy functional, 
and of a neutral scalar field $N$, which implies a further {\it nontrivial nonlocality} of the problem.
Indeed, as in \cite{HHJ2014}, we have to deal with a problematic two-variables energy functional and, as first step, we solve the second equation for any $u$ fixed finding $N_u$, the unique solution. 
Then we have to treat a one-variable energy functional with two different nonlocal terms, the Chern-Simons one and that one related to $N_u$, which are in strong competition each other. This creates several non-trivial obstacles to the application of classical variational techniques: the geometrical aspects of the functional and the boundedness of its Palais-Smale sequences are not clear and standard at all. In order to overcome such difficulties,
following \cite{JC}, we introduce a truncated functional with more usual geometrical properties and  for which we are able to find a critical point which is, actually, a critical point of the our un-truncated functional requiring that the coupling constant $e$ is  sufficiently small. Clearly looking for a nontrivial solution for \eqref{sistema} for {\em any fixed} $e$ is still an open problem and a stimulating challenge but, up to our knowledge, this is the first result for this physical model in presence of a nontrivial electromagnetic field. Another interesting open problem is the study of the Chern-Simons limit, namely the behavior of the solutions as the gauge coupling constant $q$ goes to infinity.

Our main result is the following
\begin{theorem}\label{main}
There exists a nontrivial solution of system \eqref{sistema} for any coupling constant $e$ sufficiently small.
\end{theorem}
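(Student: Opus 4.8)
The plan is to reduce the two-variable system to a single-variable variational problem and then apply a truncation argument in the spirit of Jeanjean to obtain a bounded Palais-Smale sequence, finally showing the truncation is inactive for small $e$.

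First I would analyze the second equation $-\Delta N + \kappa^2 q^2 N + q(1+\kappa q/(2m)) u^2 = 0$. For each fixed $u \in \Hr$, this is a linear elliptic equation in $N$ with a positive zeroth-order coefficient $\kappa^2 q^2$, so by Lax--Milgram it admits a unique solution $N_u \in \H$. I would record the explicit sign ($N_u \le 0$ since the source term is nonpositive up to sign), the regularity, and crucially the differentiability and continuity of the map $u \mapsto N_u$, together with estimates of the form $\|N_u\|_{\H} \lesssim \|u\|_{L^4}^2$. Substituting $N_u$ back into the first equation yields an Euler--Lagrange equation for a reduced functional $\J$ (say) on $\Hr$, whose critical points correspond to solutions of \eqref{sistema}. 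Here the working space should be the radial space $\Hr$, exploiting the compact embedding $\Hr \hookrightarrow L^p(\RD)$ for $p \in (2,\infty)$ to recover compactness.

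Next I would examine the geometry of the reduced functional. The difficulty flagged in the introduction is that the Chern--Simons nonlocal term and the $N_u$-nonlocal term compete, so it is not clear the functional has a mountain-pass geometry with bounded Palais--Smale sequences; the competing signs obstruct the usual Nehari or scaling arguments. To circumvent this I would follow \cite{JC} and introduce a truncated functional $\J^T$ by multiplying the problematic (higher-order nonlocal) terms by a cutoff depending on $\|u\|$, chosen so that $\J^T$ enjoys a clean mountain-pass geometry: positivity of the energy on a small sphere, and a point where the energy is negative. For the truncated functional I would verify the Palais--Smale condition: extract a weakly convergent subsequence, use the compact radial embeddings to pass to the limit in the nonlocal and cubic terms, and upgrade weak to strong convergence. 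This produces a critical point $u_e$ of $\J^T$ at the mountain-pass level $\c$.

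The main obstacle, and the heart of the proof, is the final step: showing that for $e$ sufficiently small the critical point $u_e$ of the truncated functional actually satisfies $\|u_e\|$ below the truncation threshold, so that the cutoff is identically $1$ near $u_e$ and $u_e$ solves the original equation. The key is a uniform-in-$e$ a priori bound on the mountain-pass level $\c$ and on $\|u_e\|$: I would estimate $\c$ by evaluating $\J^T$ along a fixed test path independent of $e$, observing that the Chern--Simons term carries a factor $e^4$ (hence is negligible as $e \to 0$), so that $\c$ stays bounded by a constant times a quantity that vanishes with $e$. Combining this level estimate with the Palais--Smale identity $\J^T(u_e) - \tfrac{1}{2}\langle (\J^T)'(u_e), u_e\rangle$ should yield $\|u_e\| \to 0$ (or at least below threshold) as $e \to 0$, deactivating the truncation. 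Care must be taken because $\omega$ and the other constants are fixed while only $e$ is small, so the estimates must isolate the $e$-dependence cleanly; verifying that the competing nonlocal terms do not spoil this bound is the delicate technical point.
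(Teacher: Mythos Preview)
Your strategy matches the paper's essentially line for line: reduce via $N_u$, truncate the Chern--Simons term with a cut-off on $\|u\|$, run the Mountain Pass, and then show the truncation is inactive for small $e$ by combining a level estimate with a $J-\lambda J'[u]$ identity. Two points need correction, however.

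First, the coefficient in the Palais--Smale identity must be $\tfrac14$, not $\tfrac12$. The quadratic part of $J_{e,T}$ is $\tfrac{1}{4m}\|\nabla u\|_2^2+\tfrac{\omega}{2}\|u\|_2^2$, while its contribution to $J'_{e,T}(u)[u]$ is $\tfrac{1}{2m}\|\nabla u\|_2^2+\omega\|u\|_2^2$; with $\lambda=\tfrac12$ these cancel exactly, and you are left with no control of $\|u\|$. Taking $\lambda=\tfrac14$ instead eliminates the quartic $N_u$ and $\|u\|_4^4$ terms (both homogeneous of degree four) and leaves $\tfrac{1}{8m}\|\nabla u\|_2^2+\tfrac{\omega}{4}\|u\|_2^2$ minus the truncated Chern--Simons contributions, which are bounded by $Ce^4 T^6$ thanks to the cut-off. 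This yields $C\|u\|^2 \le c_{e,T} + Ce^4 T^6$.

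Second, neither $c_{e,T}$ nor $\|u_e\|$ tends to zero as $e\to 0$. Evaluating $J_{e,T}$ along a fixed path gives $c_{e,T}\le A_1 + C_1 e^4 T^6$ with $A_1>0$ independent of $e,T$; hence $\|u_e\|^2 \le C(A_1 + Ce^4 T^6)$ stays bounded below by a positive constant. The actual mechanism is to first choose $T=\bar T$ large enough that $CA_1 < \bar T^2$, and then choose $e$ small enough (depending on $\bar T$) so that the $e^4 \bar T^6$ correction does not destroy this inequality. Your parenthetical ``or at least below threshold'' is the correct statement; the ``$\|u_e\|\to 0$'' claim is false and should be dropped.
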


This paper is organized as follows. 
In Section \ref{deduction}, we introduce the derivation of \eqref{sistema}, explaining the physical motivation of such model 
and the role of all the physical constants. 
In Section \ref{se:proof} we present the variational formulation of the problem and prove our main result.

\medskip
We conclude with some notations. In the following we denote by $\|\cdot\|$ and by $\|\cdot \|_p$ respectively the usual norms in $\H$ and in $L^p(\RD)$, with $1\le p\le +\infty$. All the integrals, if not specified differently, are evaluated on $\RD$ with respect to the standard Lebesgue measure. Finally, $C,C_i$ are fixed independent constants  which may vary from line to line.

\section{Deduction of the system}\label{deduction}

As in \cite{DT}, our starting point is the $(2+1)$-dimensional relativistic Lagrangian density 
\begin{align*}
\mathcal{L}^{\rm R}(\phi,N,A_0,A_1,A_2)
&=
\underbrace{\overline{D_\mu\phi} D^\mu\phi
+\frac{1}{2q} \partial_\mu N \partial^\mu N
-\frac{1}{c^2} |\phi|^2 \left(N+\frac{1}{\kappa c}v^2\right)^2
-\frac{q}{2c^2} (|\phi|^2+\kappa c N)^2}_{\rm matter}\\
&\qquad
-\underbrace{\frac{1}{4q} F_{\mu\nu}F^{\mu\nu}}_{\rm Maxwell}
+\underbrace{\frac{\kappa}{4}\varepsilon^{\mu\nu\alpha}F_{\mu\nu}A_\alpha}_{\rm Chern-Simons}
\end{align*}
in the Minkowski space $\R\times\R^2$ with metric $(1,-1,-1)$, where we use Greek letters to denote space-time indices $0,1,2$ and Latin letters for spatial indices $1,2$,
$\phi:\R\times\R^2\to \mathbb{C}$ is a charged scalar field, $N:\R\times\R^2 \to \R$ is a neutral scalar field, $A_\mu:\R\times\R^2 \to \R$ are the components of the gauge potential, $F_{\mu \nu}:=\partial_\mu A_\nu - \partial_\nu A_\mu$, $q>0$ is the gauge coupling constant, $\kappa$ is the Chern-Simons coupling constant, $c$ is the velocity of the light, $v$ is a real constant, $\varepsilon^{\mu\nu\alpha}$ is the Levi-Civita tensor and 
\[
D_\mu = \partial_\mu + \frac{ie}{c} A_\mu,
\]
being $e>0$ the so called coupling constant.

As it is well described in \cite{DT}, $\mathcal{L}^{\rm R}$ represents an interpolation between the pure Chern-Simons interaction model (see \cite{HKP,JW}) and the Abelian Higgs model with pure Maxwell interaction (see \cite{B}).

Indeed, the Lagrangian density $\mathcal{L}^{\rm R}$ is the sum of three types of terms.
The first one is due to the matter field $\phi$ which interacts with the electromagnetic field through the covariant derivatives $D_\mu$ and contains a neutral scalar field $N$ (with mass equal to the gauge field mass).
The last two terms are, respectively, the classical Maxwell Lagrangian density and the additional Chern-Simons term.\\
Even if, classically, the relativistic Abelian Higgs model presents both Chern-Simons and Maxwell terms for the gauge field in the Lagrangian density, recently, physical models in which the Chern-Simons term represents the entire gauge field action have been studied (see e.g. \cite{HKP,JLW,JP1,JP2,JW}). Indeed, for instance, at large distances and low energies the lower derivatives of the Chern-Simons term dominate the higher derivative appearing in the Maxwell term; hence
this last term becomes negligible.\\
%
%
%
%
%
%
%
%
%
Taking
\[
\phi (t,x) = \frac{1}{\sqrt{2m}} (e^{-imc^2 t}\psi(t,x)+e^{imc^2 t}\tilde{\psi}(t,x)) 
\]
and repeating the same arguments of Dunne and Trugenberger \cite{DT}, namely, dropping  all terms which oscillate as $c\to +\infty$, keeping only dominant inverse powers of $c$, considering the zero-antiparticle sector ($\tilde{\psi}=0$) and removing the Maxwell term, we arrive to 
\begin{align*}
\mathcal{L}(\psi,N,A_0,A_1,A_2)
&=
i \bar{\psi} D_t \psi
-\frac{1}{2m}  |{\bf D}\psi|^2
+\frac{1}{2q}\partial_\alpha N \partial^\alpha N 
- \frac{1}{2} \kappa^2 q N^2
-\frac{q}{8m^2c^2} |\psi|^4\\
&\qquad
- \left(1+\frac{\kappa q}{2mc}\right) |\psi|^2 N
+\frac{\kappa}{4} \varepsilon^{\mu\alpha\beta}A_\mu F_{\alpha\beta},
\end{align*}
where $m=\frac{v^2}{\kappa c^3}$ is the mass of the electric potential $A^0$,  $D_t=\partial_t+ieA^0$, ${\bf D}=\nabla - ie {\bf A}$, $(A^0,{\bf A})=(A^0,A^1,A^2)=(A_0,-A_1,-A_2)$ due to the choice of the metric.

Making the variations with respect to all the components and taking, as usual and for simplicity, $c=1$, we get
\[
iD_t\psi + \frac{1}{2m} {\bf D}^2 \psi - \frac{q}{4m^2} |\psi|^2 \psi - \left(1+\frac{\kappa q}{2m}\right) \psi N = 0,
\]
\[
\partial_{tt} N - \Delta N + \kappa^2 q^2 N + q\left(1+\frac{\kappa q}{2m}\right) |\psi|^2=0,
\]
\[
\kappa(\partial_2 A^1 - \partial_1 A^2)= e |\psi|^2,
\]
\[
\kappa m (\partial_2 A^0 + \partial_0 A^2) = e\mathfrak{Im}(\bar{\psi} D_1 \psi),
\]
\[
\kappa m (\partial_0 A^1 + \partial_1 A^0) = -e\mathfrak{Im}(\bar{\psi} D_2 \psi),
\]
in $\R\times\R^2$.

If we look for waves $\psi(t,x)=u(t,x){\rm e}^{iS(t,x)}$, then the Lagrangian density $\mathcal{L}$ depends on $(u,S,N,A^0,A^1,A^2)$ and the corresponding variations are 
\[
-\frac{1}{2m}\Delta u + \left[\frac{1}{2m}|\nabla S - e {\bf A}|^2+(\partial_t S + eA^0)+\left(1+\frac{\kappa q}{2 m}\right)N\right]u + \frac{q}{4m^2} u^3=0,
\]
\[
\partial_t (u^2) + \frac{1}{m} \operatorname{div} ((\nabla S - e {\bf A})u^2)=0,
\]
\[
\partial_{tt} N - \Delta N + \kappa^2 q^2 N + q\left(1+\frac{\kappa q}{2m}\right) u^2=0,
\]
\[
\kappa(\partial_2 A^1 - \partial_1 A^2)= e u^2,
\]
\[
\kappa m (\partial_2 A^0 + \partial_0 A^2)  = e(\partial_1 S - e A^1)u^2,
\]
\[
\kappa m (\partial_0 A^1 + \partial_1 A^0) = - e(\partial_2 S - e A^2)u^2.
\]

In the static case ($A^\mu=A^\mu(x)$ and $N=N(x)$), if we look for standing waves $\psi(t,x)={\rm e}^{i\omega t} u(x)$, with $\omega>0$, in the Coulomb gauge ($\partial_j A^j=0$), the set of the Euler-Lagrange equations becomes
\begin{eqnarray}
&\displaystyle -\frac{1}{2m}\Delta u + \left[\frac{e^2}{2m}|{\bf A}|^2+(\omega + eA^0)+\left(1+\frac{\kappa q}{2 m}\right)N\right]u + \frac{q}{4m^2} u^3=0,&\label{eq1}\\
&{\bf A}\cdot \nabla (u^2)=0,&\label{eq2}\\
&\displaystyle - \Delta N + \kappa^2 q^2 N + q\left(1+\frac{\kappa q}{2m}\right) u^2=0,&\label{eq3}\\
&\kappa(\partial_2 A^1 - \partial_1 A^2)= e u^2,& \label{eq4}\\
&\kappa m \partial_2 A^0  =  - e^2 A^1 u^2,& \label{eq5}\\
&\kappa m \partial_1 A^0 =  e^2 A^2 u^2,& \label{eq6}
\end{eqnarray}
in $\R\times\R^2$.

Since the problem is invariant by translations, to avoid the related difficulties, we look for radial solutions $u$.\\
Hence, if $u$ is radially symmetric, to have that \eqref{eq2} is always satisfied (up to trivial cases), we take $\bf{A}$ tangential, i.e. 
\[\mathbf A=\frac{e}{\kappa}h_{u}(x)\mathbf t,
\quad
\hbox{where }
\mathbf t=(x^{2}/|x|^{2},-x^{1}/|x|^{2}).
\]
Thus, equation \eqref{eq4} implies
\begin{equation}
\label{symm1}
u^2
=
\frac{x\cdot \nabla h}{|x|^2}
\end{equation}
and the Coulomb gauge can be written as
\begin{equation}
\label{symm2}
\frac{x^2 \partial_1 h - x^1 \partial_2 h}{|x|^2}=0
\end{equation}
for $|x|\neq 0$.\\
Hence, combining \eqref{symm1} and \eqref{symm2} we get that $h_{u}$ has to be a radial function and so we can write
\begin{equation}\label{A}
A^{1}(x)=\frac{e}{\kappa}\frac{x^{2}}{|x|^{2}}h_{u}(|x|),
\quad
A^{2}(x)=-\frac{e}{\kappa}\frac{x^{1}}{|x|^{2}}  h_{u}(|x|).
\end{equation}
Finally, \eqref{eq5} and \eqref{eq6} imply that
\begin{equation}
\label{gradelpot}
\nabla A^{0}=-\frac{e^{3}}{m\kappa^{2} } u^{2}(|x|)h_{u}(|x|)  \mathbf n,
\text{ where }  \mathbf n=(x^{1}/|x|^{2}, x^{2}/|x|^{2}),
\end{equation}
and then the electric potential is radial, i.e. $A^{0}(x)=A^{0}(|x|)$.\\
Thus, by \eqref{symm1} we obtain
\[
h_{u}(|x|)=
\int_{0}^{|x|}s u^{2}(s)ds,
\]
and, considering the electric potential $A^0$ null at infinity, by \eqref{gradelpot} we infer
\begin{equation}
\label{A0}
A^{0}(|x|)=\frac{e^{3}}{ m \kappa^{2}} \int_{|x|}^{\infty}\frac{u^{2}(s)}{s} h_{u}(s) ds.
\end{equation}
Hence, replacing \eqref{A} and \eqref{A0} in \eqref{eq1} and considering \eqref{eq3}, we arrive to \eqref{sistema}.

\section{Proof of Theorem \ref{main}} \label{se:proof}

We will look for solutions of \eqref{sistema} as critical points of the functional $I_e: H^1_r(\mathbb{R}^2)\times H^1_r(\mathbb{R}^2)\to\mathbb{R}$ defined by
\begin{align*}
I_e(u,N)
&=
\frac{1}{4m} \|\nabla u\|_2^2
+\frac{\omega}{2}\|u\|_2^2
+ \frac{e^4}{4m\kappa^2} \int \frac{u^2(|x|) h_u^2(|x|)}{|x|^2} 
+ \frac{1}{2} \left(1+\frac{\kappa q}{2 m}\right)\int N u^2\\
& \qquad
+ \frac{q}{16 m^2} \| u\|_4^4
+\frac{1}{4q}\|\nabla N\|_2^2 
+\frac{\kappa^2 q}{4} \|N\|_2^2,
\end{align*}
where
\[
\Hr:=\{u\in \H: u \hbox{ is radially symmetric}  \}.
\]
Observe that the functional $I_e$ is well defined since, for any $u\in \Hr$, we have that
\begin{equation}
\label{ineqhu}
\int \frac{u^2(|x|)h_u^2(|x|)}{|x|^2} 
\leq
C\|u\|^6.
\end{equation}	

We remark that, even if the functional depends on several parameters, we will consider all of them fixed, except $e$, and for this reason we underline this fact with the subscript ``$e$".
 
The two-variables functional $I_e$ is not easy to treat and so, as done in \cite{HHJ2014}, we transform it into a one-variable functional solving the second equation of \eqref{sistema}, for any fixed $u\in \Hr$. Indeed, for every fixed $u\in H^1(\mathbb{R}^2)$, the second equation of \eqref{sistema} has a unique solution that satisfies the following properties, as proved in \cite[Lemma 3.1, Lemma 3.2, Lemma 3.4]{HHJ2014}. 

\begin{lemma}\label{le:N}
For every $u\in H^1(\mathbb{R}^2)$, there exists a unique $N_u\in H^1(\mathbb{R}^2)$ that solves the second equation of \eqref{sistema}. Moreover
\begin{enumerate}[label=(\roman*),ref=\roman*]
\item $N_u \leq 0$ a.e. in $\mathbb{R}^2$;
\item the map $u\mapsto N_u$ is $C^1$;
\item \label{v31}$\displaystyle \left|\int N_u u^2\right|=-\int N_u u^2 \leq\frac{1}{\kappa^2 q}\left(1+\frac{\kappa q}{2m}\right)\|u\|_4^4$;
\item \label{vi31}$N_{tu}=t^2 N_u$, for any $t\in \R$;
\item \label{radial} if $u\in \Hr$, then also $N_u\in \Hr$.
\end{enumerate}
\end{lemma}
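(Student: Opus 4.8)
The plan is to exploit that, once $u$ is fixed, the second equation of \eqref{sistema} is \emph{linear} in $N$, so the entire statement reduces to a study of the resolvent operator $(-\Delta+\kappa^2q^2)^{-1}$ on $\mathbb{R}^2$. First I would recast the equation as
\[
-\Delta N+\kappa^2q^2 N=-q\left(1+\frac{\kappa q}{2m}\right)u^2,
\]
and observe that, by the Sobolev embedding $H^1(\mathbb{R}^2)\hookrightarrow L^p(\mathbb{R}^2)$ (valid for every $p\in[2,\infty)$ in dimension two), one has $u\in L^4$, hence the source term $u^2\in L^2(\mathbb{R}^2)\hookrightarrow H^{-1}(\mathbb{R}^2)$. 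Existence and uniqueness of $N_u\in H^1(\mathbb{R}^2)$ then follow from the Lax--Milgram theorem applied to the bilinear form $a(N,\varphi)=\int\nabla N\cdot\nabla\varphi+\kappa^2q^2\int N\varphi$, which is bounded and, since $\kappa,q>0$, coercive on $H^1$ because $a(N,N)\ge\min\{1,\kappa^2q^2\}\|N\|^2$.

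For the sign property (i) I would test the equation against $N_u^+=\max\{N_u,0\}$: using $\int\nabla N_u\cdot\nabla N_u^+=\|\nabla N_u^+\|_2^2$ and $\int N_uN_u^+=\|N_u^+\|_2^2$, while the right-hand side $-q(1+\frac{\kappa q}{2m})\int u^2N_u^+\le0$, one gets $\|\nabla N_u^+\|_2^2+\kappa^2q^2\|N_u^+\|_2^2\le0$, forcing $N_u^+\equiv0$, i.e.\ $N_u\le0$. The bound (iii) then comes from testing with $N_u$ itself, which gives $\|\nabla N_u\|_2^2+\kappa^2q^2\|N_u\|_2^2=-q(1+\frac{\kappa q}{2m})\int u^2N_u$; dropping the nonnegative gradient term and using Cauchy--Schwarz in the form $|\int u^2N_u|\le\|u\|_4^2\|N_u\|_2$ yields $\|N_u\|_2\le\frac{1}{\kappa^2q}(1+\frac{\kappa q}{2m})\|u\|_4^2$, whence
\[
\left|\int N_uu^2\right|=-\int N_uu^2\le\|u\|_4^2\|N_u\|_2\le\frac{1}{\kappa^2q}\left(1+\frac{\kappa q}{2m}\right)\|u\|_4^4,
\]
the first equality being justified by $N_u\le0$.

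The scaling (iv) and the symmetry (v) both follow from uniqueness. Inserting $t^2N_u$ into the equation for $N_{tu}$ shows it solves that equation, since the linear operator is unchanged and the source scales exactly by $t^2$; hence $N_{tu}=t^2N_u$. Similarly, if $u\in\Hr$ then $u^2$ is invariant under every rotation $R$, and since $-\Delta+\kappa^2q^2$ commutes with $R$, both $N_u$ and $N_u\circ R$ solve the same equation and must coincide, so $N_u$ is radial. For the $C^1$-regularity (ii) I would apply the implicit function theorem to
\[
F(u,N)=-\Delta N+\kappa^2q^2N+q\left(1+\frac{\kappa q}{2m}\right)u^2,
\]
regarded as a map $H^1(\mathbb{R}^2)\times H^1(\mathbb{R}^2)\to H^{-1}(\mathbb{R}^2)$: the partial differential $\partial_NF=-\Delta+\kappa^2q^2$ is a topological isomorphism onto $H^{-1}(\mathbb{R}^2)$ by the same coercivity estimate, so the only nontrivial point is the $C^1$ character of $F$.

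The main obstacle is precisely the regularity of the Nemytskii term $u\mapsto u^2$, which is where the two-dimensionality enters in an essential way. One must check that this map is $C^1$ from $H^1(\mathbb{R}^2)$ into $L^2(\mathbb{R}^2)\hookrightarrow H^{-1}(\mathbb{R}^2)$, with Fr\'echet derivative $v\mapsto2uv$; the identity $(u+v)^2-u^2-2uv=v^2$ together with $\|v^2\|_2=\|v\|_4^2\le C\|v\|^2$ gives differentiability, and the estimate $\|2(u_1-u_2)v\|_2\le C\|u_1-u_2\|_4\|v\|_4$ gives continuity of the derivative. With this in hand the implicit function theorem delivers (ii) immediately.
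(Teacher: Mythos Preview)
Your argument is correct in every part: Lax--Milgram for existence/uniqueness, testing against $N_u^+$ for the sign, the Cauchy--Schwarz chain for the bound (iii), uniqueness for (iv) and (v), and the implicit function theorem with the $C^1$ check of $u\mapsto u^2$ for (ii). The paper itself does not prove this lemma but simply quotes \cite[Lemmas 3.1, 3.2, 3.4]{HHJ2014}; you have supplied a clean self-contained proof that matches the approach in that reference.
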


Thus, system \eqref{sistema} can be written as the double nonlocal equation
\[
-\frac{1}{2m}\Delta u 
+\omega u
+ \frac{e^4}{2m\kappa^2}\frac{h_{u}^{2}(|x|)}{|x|^{2}} u
+ \frac{e^{4}}{ m \kappa^{2}} u \int_{|x|}^{\infty}\frac{u^{2}(s)}{s} h_{u}(s) ds
+\left(1+\frac{\kappa q}{2 m}\right)N_u u 
+ \frac{q}{4m^2} u^3=0
\]
in $\R^2$ and,  since, by the second equation of \eqref{sistema},
\[
\|\nabla N_u\|_2^2 +\kappa^2 q^2 \|N_u\|_2^2 = -q\left(1+\frac{\kappa q}{2m}\right) \int N_u u^2,
\]
we consider the reduced functional $J_e:\Hr \to \R$ defined as
\begin{align*}
J_e(u)
&: =
I_e(u,N_u)\\
&=
\frac{1}{4m} \|\nabla u\|_2^2
+\frac{\omega}{2}\|u\|_2^2
+ \frac{e^4}{4m\kappa^2} \int \frac{u^2(|x|)h_u^2(|x|)}{|x|^2} 
+ \frac{1}{4} \left(1+\frac{\kappa q}{2 m}\right)\int N_u u^2 
+ \frac{q}{16 m^2} \| u\|_4^4.
\end{align*}
Observe that $J_e$ is of class $C^1$ and 
\begin{align*}
J_e'(u)=\de_u I_e(u,N_u)+\de_N I_e(u,N_u)N'_u=\de_u I_e(u,N_u).
\end{align*}
Finally, arguing as in \cite{byeon,HHJ2014}, we have the following
\begin{lemma}\label{Lemma3.2}
If $u\in \Hr$ is a critical point of $J_e$ then the pair $(u,N_u)$ is a critical point of $I_e$ and so a solution of \eqref{sistema}.
\end{lemma}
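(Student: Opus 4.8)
The plan is to read the statement as a composition of two implications: a critical point $u$ of the reduced functional $J_e$ yields a critical pair $(u,N_u)$ of the two--variable functional $I_e$, and any critical point of $I_e$ on $\Hr\times\Hr$ solves \eqref{sistema}. The argument is the by now standard verification that the reduction ``$u\mapsto(u,N_u)$'' is faithful, as in \cite{byeon,HHJ2014}, and it rests only on facts already at hand: the $C^1$ dependence of $N_u$ on $u$ (Lemma \ref{le:N}), the bound \eqref{ineqhu}, and the chain--rule identity recorded immediately before the statement.

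First I would note that the construction of $N_u$ is nothing but the vanishing of the partial differential of $I_e$ in its second slot. Indeed, for $u$ fixed, $N\mapsto I_e(u,N)$ is smooth and its Euler--Lagrange equation is exactly the second equation of \eqref{sistema}; since $N_u$ is the unique solution of that equation (Lemma \ref{le:N}), we have $\de_N I_e(u,N_u)=0$ for every $u\in\Hr$. Feeding this into
\[
J_e'(u)=\de_u I_e(u,N_u)+\de_N I_e(u,N_u)N'_u=\de_u I_e(u,N_u),
\]
the hypothesis $J_e'(u)=0$ forces $\de_u I_e(u,N_u)=0$ as well. Thus both partial differentials of $I_e$ vanish at $(u,N_u)$, i.e.\ $I_e'(u,N_u)=0$, which proves the first implication.

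It remains to identify $I_e'(u,N_u)=0$ with the system. Testing $\de_u I_e(u,N_u)=0$ against an arbitrary radial $\varphi\in\Hr$ gives the weak form of the first equation of \eqref{sistema} with $N=N_u$, while $\de_N I_e(u,N_u)=0$ returns the second equation; since every coefficient is radial and radial functions are dense in $\Hr$, these weak identities are equivalent to \eqref{sistema}, and standard elliptic regularity promotes the weak radial solution to a genuine one. The one step that is not pure bookkeeping, and hence the main obstacle, is the differentiation of the nonlocal Chern--Simons term $\int \frac{u^2 h_u^2}{|x|^2}$: from $\tfrac{d}{dt}\big|_{t=0}h_{u+t\varphi}(|x|)=2\int_0^{|x|}s\,u\varphi\,ds$ one obtains the \emph{diagonal} contribution $\tfrac{e^4}{2m\kappa^2}\,\tfrac{h_u^2}{|x|^2}u$ and, after a Fubini exchange in the cross term, the second nonlocal coefficient $\tfrac{e^4}{m\kappa^2}u\int_{|x|}^{\infty}\tfrac{u^2(s)}{s}h_u(s)\,ds$ of \eqref{sistema}; the integrability making all this licit is exactly \eqref{ineqhu}. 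Once this differentiation is in place, matching the remaining local terms is immediate and the identification is complete.
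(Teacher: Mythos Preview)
Your argument is correct and is precisely the standard reduction that the paper invokes by citing \cite{byeon,HHJ2014}; the paper itself supplies no independent proof, so there is nothing different to compare. The only phrasing I would tighten is the clause ``radial functions are dense in $\Hr$'', which is tautological as written: what you actually need is either that every term in $\de_u I_e(u,N_u)$ is a radial distribution (so vanishing against all radial tests already forces it to vanish identically), or equivalently Palais' principle of symmetric criticality, to pass from the constrained critical point on $\Hr\times\Hr$ to a solution of \eqref{sistema} on $\RD$.
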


In the functional $J_e$ there is a strong competition between the two nonlocal terms. As a consequence, the geometrical aspects of $J_e$ and the boundedness of its Palais-Smale sequences are not clear and standard at all. In order to overcome such difficulties,
following \cite{JC}, we introduce the cut-off function $\chi\in C^\infty(\R_+,\R)$ satisfying 
\[
\begin{cases}
\chi(s)=1 & \hbox{ for } s\in[0,1],\\
0\leq\chi\leq 1 & \hbox{ for } s\in[1,2],\\
\chi(s)=0 & \hbox{ for } s\in[2,+\infty[,\\
\|\chi'\|_\infty \leq 2,
\end{cases}
\]
and, for any $T>0$, define the following truncated functional $\J:\Hr\to \R$ as
\begin{align*}
J_{e,T}(u)
&:=
\frac{1}{4m} \|\nabla u\|_2^2
+\frac{\omega}{2}\|u\|_2^2
+ \frac{e^4}{4m\kappa^2} K_T (u) \int \frac{u^2(|x|)h_u^2(|x|)}{|x|^2} 
+ \frac{1}{4} \left(1+\frac{\kappa q}{2 m}\right)\int N_u u^2 
+ \frac{q}{16 m^2} \| u\|_4^4,
\end{align*}
where 
\begin{equation}\label{kt}
K_T(u):=\chi\left(\frac{\|u\|^2}{T^2}\right).
\end{equation}
Clearly $J_{e,T}$ is of class $C^1$ and if, for certain $T,e>0$, $\bar u\in \Hr$ is a critical point of $\J$ such that $\|\bar u\|< T$, then $\bar u$ is a critical point also for $J_e$ and so, by Lemma \ref{Lemma3.2}, the pair $(\bar u, N_{\bar u})$ is a solution of \eqref{sistema}.

The truncated functional $\J$ satisfies the geometrical assumptions of the Mountain Pass Theorem for all $e,T>0$. More precisely, we have

\begin{proposition}\label{pr:PM}
Independently by $e,T>0$, the functional $\J$ satisfies the following properties:
\begin{enumerate}[label=(\roman*),ref=\roman*]
\item there exist $\rho>0$ and $\alpha>0$  such that $\J(u)\ge \alpha$, for all $u\in H^1_r(\RD)$ such that $\|u\|=\rho$;
\item \label{ii33}there exists $\bar u\in \Hr$, with $\|\bar u\|>\rho$, such that $\J(\bar u)<0$. 
\end{enumerate}
\end{proposition}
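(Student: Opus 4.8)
The plan is to verify the two standard Mountain Pass geometry conditions directly from the explicit form of $\J$, exploiting the fact that all five terms of $\J$ are non-negative except possibly the $N_u$-term, which is controlled by Lemma \ref{le:N}\eqref{v31}. The crucial structural observation is that the cut-off $K_T(u)$ is bounded between $0$ and $1$, so the problematic Chern-Simons nonlocal term, although it carries the $e^4$ factor, is harmless for establishing geometry: near the origin it only helps, and far away it is switched off. Thus the estimates should be \emph{uniform in $e$ and $T$}, as the statement requires.

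For part (i), I would bound $\J(u)$ from below. The gradient and $L^2$ terms give $\min\{\frac{1}{4m},\frac{\omega}{2}\}\|u\|^2$ up to constants; the Chern-Simons term is non-negative and may be discarded; the quartic term $\frac{q}{16m^2}\|u\|_4^4$ is non-negative and may also be discarded for the lower bound. The only sign-indefinite contribution is $\frac14\left(1+\frac{\kappa q}{2m}\right)\int N_u u^2$, but by Lemma \ref{le:N}\eqref{v31} this term satisfies
\[
\left|\frac14\left(1+\frac{\kappa q}{2m}\right)\int N_u u^2\right|
\le
\frac{1}{4\kappa^2 q}\left(1+\frac{\kappa q}{2m}\right)^2\|u\|_4^4
\le C\|u\|^4,
\]
using the continuous Sobolev embedding $\Hr\hookrightarrow L^4(\RD)$. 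Hence $\J(u)\ge C_1\|u\|^2 - C_2\|u\|^4$, and choosing $\|u\|=\rho$ with $\rho$ small enough (independently of $e,T$) makes the right-hand side equal to a positive constant $\alpha$, as desired.

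For part (ii), I would fix any nontrivial $u_0\in\Hr$ and evaluate $\J$ along the ray $t u_0$, letting $t\to+\infty$. Once $t$ is large enough that $\|t u_0\|^2 = t^2\|u_0\|^2 \ge 2T^2$, the cut-off vanishes, $K_T(t u_0)=0$, so the Chern-Simons term disappears entirely. Using the homogeneity relation $N_{tu}=t^2 N_u$ from Lemma \ref{le:N}\eqref{vi31}, the $N_u$-term scales like $\int N_{tu_0}(tu_0)^2 = t^4\int N_{u_0}u_0^2$, which is non-positive since $N_{u_0}\le 0$. Collecting the scalings, for $t$ large we obtain
\[
\J(t u_0)
=
\frac{t^2}{4m}\|\nabla u_0\|_2^2
+\frac{\omega t^2}{2}\|u_0\|_2^2
+\frac{t^4}{4}\left(1+\frac{\kappa q}{2m}\right)\int N_{u_0}u_0^2
+\frac{q t^4}{16 m^2}\|u_0\|_4^4,
\]
so the leading order is $t^4$ with coefficient $\frac14\left(1+\frac{\kappa q}{2m}\right)\int N_{u_0}u_0^2 + \frac{q}{16m^2}\|u_0\|_4^4$.

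The main obstacle is that this leading $t^4$-coefficient is a competition between the strictly negative $N_u$-contribution and the strictly positive quartic Higgs term, and its sign is not obvious \emph{a priori}. To force negativity one must exploit the sharpness of the bound in Lemma \ref{le:N}\eqref{v31}: I expect the correct strategy is not to use a generic $u_0$ but to choose the test function so that $\int N_{u_0} u_0^2$ is as negative as the bound allows, or equivalently to pick $u_0$ concentrated so that the two quartic coefficients can be compared explicitly, possibly after rescaling $u_0\mapsto u_0(\lambda\,\cdot\,)$ to tune the relative size of $\int N_{u_0}u_0^2$ against $\|u_0\|_4^4$. Establishing that some admissible profile yields a negative $t^4$-coefficient (uniformly in $e,T$, which it is, since neither appears at this order) is the real content of part (ii); everything else is routine scaling and Sobolev estimates.
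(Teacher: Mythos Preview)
Your argument for part (i) is correct and coincides with the paper's: drop the non-negative Chern--Simons term, control the $N_u$-term via Lemma~\ref{le:N}\eqref{v31}, and use the Sobolev embedding $\Hr\hookrightarrow L^4(\RD)$ to obtain $\J(u)\ge C_1\|u\|^2-C_2\|u\|^4$, with constants independent of $e,T$.

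For part (ii) your setup is also the paper's---scale along a ray, kill the cut-off for large $t$, and reduce to the sign of the $t^4$-coefficient---but you have left the decisive step as a conjecture. The paper does not attempt any ad hoc rescaling; instead it invokes \cite[Lemma~3.5]{HHJ2014}, which identifies the \emph{exact} infimum
\[
\inf_{u\in\Hr}\ \frac{\|u\|_4^4}{\displaystyle\int |N_u|\,u^2}
\;=\;
\kappa^2 q\left(1+\frac{\kappa q}{2m}\right)^{-1}.
\]
The required negativity of the $t^4$-coefficient then reduces to the elementary inequality
\[
\kappa^2 q\left(1+\frac{\kappa q}{2m}\right)^{-1}
<
\frac{4m^2}{q}\left(1+\frac{\kappa q}{2m}\right),
\]
equivalent to $\kappa^2 q^2<(2m+\kappa q)^2$, which always holds. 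Hence there exists $u_\varepsilon$ with the desired negative coefficient, and $\bar u:=t u_\varepsilon$ for $t$ large does the job, uniformly in $e,T$.

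Your rescaling heuristic is in fact consistent with this: writing $N_u=-q(1+\tfrac{\kappa q}{2m})(-\Delta+\kappa^2 q^2)^{-1}u^2$, one sees on the Fourier side that the ratio $\|u\|_4^4/\int|N_u|u^2$ is minimised when $\widehat{u^2}$ concentrates at frequency zero, i.e.\ when $u$ is spread out, and the limiting value is exactly the constant above. But without either carrying this limit out or citing the lemma from \cite{HHJ2014}, your argument for (ii) is incomplete.
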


\begin{proof}
For the first statement, it is enough to observe that, by (\ref{v31}) in Lemma \ref{le:N}, we have
\begin{align*}
\J(u)
\geq
\frac{1}{4m} \|\nabla u\|_2^2
+\frac{\omega}{2}\|u\|_2^2
- \frac{1}{4\kappa^2 q}\left(1+\frac{\kappa q}{2m}\right)^2  \|u\|_4^4 
+ \frac{q}{16 m^2} \| u\|_4^4
\geq \alpha >0,
\end{align*}
if $\|u\|$ is small enough.
\\
Let us now prove the second part of the proposition. Observe that, for any $u\in \Hr$, by (\ref{v31}) and (\ref{vi31}) of Lemma \ref{le:N}, we infer that
\begin{align*}
\J(tu)
&=t^2\left(\frac{1}{4m} \|\nabla u\|_2^2
+\frac{\omega}{2}\|u\|_2^2\right)
+t^4\left(\frac{q}{16 m^2} \| u\|_4^4
-\frac{1}{4} \left(1+\frac{\kappa q}{2 m}\right)\int |N_{u}| u^2 \right)
\\
&\quad + \frac{e^4 t^6}{4m\kappa^2}K_T(tu) \int \frac{u^2(|x|)h_u^2(|x|)}{|x|^2}. 
\end{align*}
Thus, recalling the definition of $K_T$, see \eqref{kt}, for $t$ sufficiently large, we need only to find $u_\varepsilon$ such that
\begin{equation}
\label{cond}
\frac{q}{4 m^2} \| u_\varepsilon\|_4^4
- \left(1+\frac{\kappa q}{2 m}\right)\int |N_{u_\varepsilon}| u_\varepsilon^2
<0.
\end{equation}
By \cite[Lemma 3.5]{HHJ2014} we know that
\[
\inf_{\Hr} \left(\| u\|_4^4/\int |N_{u}| u^2\right) = \kappa^2 q \left(1+\frac{\kappa q}{2 m}\right)^{-1}
\]
and so, since
\[
\kappa^2 q \left(1+\frac{\kappa q}{2 m}\right)^{-1}
<
\frac{4m^2}{q}\left(1+\frac{\kappa q}{2 m}\right),
\]
\eqref{cond} is satisfied for a suitable $u_\varepsilon$.
\end{proof}

For any $e,T>0$, let us define $\c$ the Mountain Pass level for the functional $\J$, namely
\[
\c:=\inf_{t\in [0,1]}\sup_{\gamma(t)\in \Gamma_{e,T}} \J(\gamma(t)),
\]
where
\[
\Gamma_{e,T}:=\{\gamma\in C([0,1],\Hr):\gamma(0)=0,\|\gamma(1)\|>\rho,\J(\gamma(1))<0\}.
\]
By Proposition \ref{pr:PM}, $\Gamma_{e,T}$ is not empty and $\c>\a$.
Thus, by  the Ekeland Variational Principle, there exists a Palais-Smale
sequence
$\{(u_{e,T})_n\}_n $ in $\Hr$  for $\J$ at level $\c$, namely such that
\begin{equation*}
\J((u_{e,T})_n)\to \c,\qquad \J'((u_{e,T})_n)\to 0,
\end{equation*}
as $n \to +\infty$. 

The next proposition shows that these Palais-Smale sequences are uniformly bounded for $e$ sufficiently small.

\begin{proposition}\label{pr:T}
There exist $\bar T > 0$ independent on $e$ and $e^*:= e(\bar T) > 0$ such that if $0<e < e^*$, then
\[
\limsup_n \|(u_{e,T})_n\|< \bar T.
\]
\end{proposition}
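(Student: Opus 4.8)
The plan is to bound the Palais-Smale sequence by the classical device of estimating $\J(u)-\tfrac14\J'(u)[u]$ from below, and to pair this with an upper bound for the Mountain Pass level $\c$ that is \emph{uniform} in $e$ and $T$; the argument is then closed by a balancing step in which $\bar T$ is fixed large and $e$ is taken small afterwards. First I would compute the combination $\J(u)-\tfrac14\J'(u)[u]$. Writing the quadratic part as $Q(u):=\tfrac1{4m}\|\n u\|_2^2+\tfrac\omega2\|u\|_2^2$, the two quartic terms are $\tfrac{q}{16m^2}\|u\|_4^4$ and $\tfrac14(1+\tfrac{\kappa q}{2m})\int N_u u^2$, the latter being homogeneous of degree four by (\ref{vi31}) of Lemma \ref{le:N}, while $B(u):=\int \tfrac{u^2 h_u^2}{|x|^2}$ is the sextic Chern-Simons term. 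The factor $\tfrac14$ is chosen exactly so that the two quartic contributions cancel identically; the quadratic part, being homogeneous of degree two, survives as $\tfrac12 Q(u)$. Carrying the derivative of $K_T(u)=\chi(\|u\|^2/T^2)$ along, one finds
\[
\J(u) - \tfrac14 \J'(u)[u]
= \tfrac12 Q(u) - \frac{e^4}{8m\kappa^2}K_T(u)B(u) - \frac{e^4}{8m\kappa^2}\chi'\!\left(\frac{\|u\|^2}{T^2}\right)\frac{\|u\|^2}{T^2}B(u).
\]
Since $\chi$ is non-increasing, $\chi'\le 0$, so when I solve this identity for $\tfrac12 Q(u)$ the $\chi'$-term has a favourable sign and may be dropped. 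Moreover $K_T(u)\ne 0$ forces $\|u\|\le\sqrt2\,T$, whence by \eqref{ineqhu} one has $K_T(u)B(u)\le CT^6$ with $C$ independent of $e$ and $T$. Using $Q(u)\ge c_0\|u\|^2$ with $c_0:=\min\{\tfrac1{4m},\tfrac\omega2\}$, this yields the coercivity estimate $\tfrac{c_0}{2}\|u\|^2\le \J(u)-\tfrac14\J'(u)[u]+Ce^4T^6/(m\kappa^2)$, with a $T^6$ defect that is the price of the cut-off.

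Next I would bound $\c$ uniformly. Taking the fixed function $u_\varepsilon$ produced in the proof of Proposition \ref{pr:PM} (which depends neither on $e$ nor on $T$, since \eqref{cond} and the infimum from \cite{HHJ2014} involve neither parameter), the ray $t\mapsto tu_\varepsilon$ is admissible, because $K_T$ vanishes for $t$ large and the quartic coefficient is negative by \eqref{cond}, so $\J(tu_\varepsilon)\to-\infty$. Hence $\c\le\sup_{t\ge0}\J(tu_\varepsilon)$. Along this ray the quadratic-minus-quartic part $t^2Q(u_\varepsilon)-t^4(\cdots)$ is a fixed real function of $t$ with a finite maximum $M_0$ depending only on $u_\varepsilon$ and the physical constants, while the sextic term is again $\le Ce^4T^6/(m\kappa^2)$ by the cut-off bound. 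Therefore $\c\le M_0+Ce^4T^6/(m\kappa^2)$, uniformly in $e$ and $T$.

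Finally I would combine the two estimates. For the Palais-Smale sequence one has $\J'((u_{e,T})_n)[(u_{e,T})_n]=o(\|(u_{e,T})_n\|)$ and $\J((u_{e,T})_n)\le\c+1$ eventually, so the coercivity estimate first forces $\{(u_{e,T})_n\}$ to be bounded and then, with $L:=\limsup_n\|(u_{e,T})_n\|$, gives $\tfrac{c_0}{2}L^2\le M_0+1+Ce^4T^6/(m\kappa^2)$ (after absorbing the two $T^6$ contributions into one constant). Setting $T=\bar T$, I would first choose $\bar T$ so large that $\tfrac{2}{c_0}(M_0+1)<\tfrac12\bar T^2$; this fixes $\bar T$ in terms of the physical constants alone, independently of $e$. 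Then I would choose $e^*=e(\bar T)$ so small that $\tfrac{2}{c_0}\,Ce^4\bar T^6/(m\kappa^2)<\tfrac12\bar T^2$, i.e. $e^4<c_0 m\kappa^2/(4C\bar T^4)$, for all $e<e^*$. Adding the two strict inequalities yields $L^2<\bar T^2$, which is the assertion.

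The main obstacle is exactly this last balancing. The sextic Chern-Simons term can be controlled only on the support of the cut-off, where it costs a factor $T^6$, whereas coercivity supplies only $T^2$; one therefore cannot let $T\to\infty$, and it is precisely the smallness of $e$ that reconciles $e^4T^6$ with $T^2$. The one point demanding care is to verify that $M_0$, $c_0$ and the test function $u_\varepsilon$ are genuinely independent of $e$ and $T$, so that the order of the choices, \emph{first} $\bar T$ and \emph{then} $e^*$, is legitimate and the final inequality is not circular.
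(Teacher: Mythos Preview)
Your argument is correct and essentially mirrors the paper's: the paper also uses the combination $\J(u_n)-\tfrac14\J'(u_n)[u_n]$, bounds $\c$ along the ray $t\bar u$ by a constant plus $Ce^4T^6$, controls the truncated sextic terms by $Ce^4T^6$ via the support of $\chi$, and closes by balancing $T^2$ against $e^4T^6$ (the paper phrases this last step by contradiction rather than by your direct choice of~$\bar T$ then~$e^*$). One minor point: the paper's hypotheses on $\chi$ do not include monotonicity, so instead of dropping the $\chi'$-term by sign the paper bounds it in absolute value using $\|\chi'\|_\infty\le2$ together with the support condition $\chi'\ne0\Rightarrow\|u\|^2\le 2T^2$---a trivial adjustment to your estimate.
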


\begin{proof}
Assume by contradiction that for any $T>0$ there exists $e>0$ such that, denoting, by simplicity, by $\{u_n\}_n:=\{(u_{e,T})_n\}_n $ the corresponding Palais-Smale sequence of $\J$ at level $\c$, we have 
\begin{equation}\label{lassurdo}
\limsup_n \|u_n\|\ge  T.
\end{equation}
Observing that 
\begin{align*}
J'_{e,T}(u_n)[u_n]
&=
\frac{1}{2m}\|\nabla u_n\|_{2}^{2}
+ \omega\|u_n\|_{2}^{2} 
+\frac{3e^4}{2m\kappa^2} K_T (u_n) \int \frac{u_n^2(|x|)h_{u_n}^2(|x|)}{|x|^2}\\
&\qquad
+ \frac{e^4}{2m\kappa^2 T^2} K'_T (u_n) \|u_n\|^2 \int \frac{u_n^2(|x|)h_{u_n}^2(|x|)}{|x|^2} 
+  \left(1+\frac{\kappa q}{2 m}\right)\int N_{u_n} u_n^2 
+ \frac{q}{4 m^2} \| u_n\|_4^4,
\end{align*}
we have 
\begin{align*}
\c+o_n(1)\|u_n\|
&=
J_{e,T}(u_n) - \frac{1}{4} J'_{e,T}(u_n)[u_n]\\
&=
\frac{1}{8m}\|\nabla u_n\|_{2}^{2}
+ \frac{\omega}{4}\|u_n\|_{2}^{2} 
-\frac{e^4}{8m\kappa^2} K_T (u_n) \int \frac{u_n^2(|x|)h_{u_n}^2(|x|)}{|x|^2}\\
&\qquad
- \frac{e^4}{8m\kappa^2 T^2} K'_T (u_n) \|u_n\|^2 \int \frac{u_n^2(|x|)h_{u_n}^2(|x|)}{|x|^2} 
\end{align*}
and so
\begin{equation}\label{checasino}
\begin{split}
C \|u_{n}\|^2 +o_n(1)\|u_n\|
&\le 
c_{e,T}
+\frac{e^4}{8m\kappa^2} K_T (u_{n}) \int \frac{u_n^2(|x|)h_{u_n}^2(|x|)}{|x|^2} 
\\
&\qquad
+\frac{e^4}{8m\kappa^2 T^2} K'_T (u_{n}) \|u_{n}\|^2 \int \frac{u_{n}^2(|x|)h_{u_n}^2(|x|)}{|x|^2}.
\end{split}
\end{equation}
As first step, let us estimate the Mountain Pass level $c_{e,T}$.
Let $\bar u\in \Hr$ be as in (\ref{ii33}) of Proposition \ref{pr:PM}. Using (\ref{v31}) in Lemma \ref{le:N} we have
\begin{align*}
c_{e,T}
&\leq
\max_{t\geq 0} J_{e,T}(t\bar u) \\
&\leq
\max_{t\geq 0}\left[
t^2\left(
\frac{1}{4m} \|\nabla \bar u\|_2^2 + \frac{\omega}{2}\|\bar u\|_2^2
\right)
+t^4\left(
\frac{q}{16 m^2} \| \bar u\|_4^4 - \frac{1}{4} \left(1+\frac{\kappa q}{2 m}\right)\int |N_{\bar u}|\bar  u^2 
\right)
\right]\\
&\qquad
+\frac{e^4}{4m\kappa^2}
\max_{t\geq 0}\left[
t^6 \chi\left(\frac{t^2\|\bar u\|^2}{T^2}\right) \int \frac{\bar u^2(|x|)h_{\bar{u}}^2(|x|)}{|x|^2} 
\right]\\
&:= A_1+ A_2(T).
\end{align*}
If $t\leq \sqrt{2}T/\|\bar u\|$, then
\[
A_2(T)\leq \frac{e^4}{4m\kappa^2} \frac{8T^6}{\|\bar u\|^6} \int \frac{\bar u^2(|x|)h_{\bar u}^2(|x|)}{|x|^2} 
:= C_1 e^4 T^6,
\]
otherwise $A_2(T)=0$.\\
Hence we have 
\begin{equation}\label{stimac}
c_{e,T}\le A_1+  C_1 e^4 T^6.
\end{equation}
Moreover, by \eqref{ineqhu},
\begin{align}\label{stima1}
\frac{e^4}{8m\kappa^2} K_T (u_{n}) \int \frac{u_{n}^2(|x|)h_{u_n}^2(|x|)}{|x|^2} 
\leq
C e^4K_T (u_{n}) \|u_{n}\|^6\leq C e^4 T^6
\end{align}
and
\begin{align}\label{stima2}
\frac{e^4}{8m\kappa^2 T^2} K'_T (u_{n}) \|u_{n}\|^2 \int \frac{u_{n}^2(|x|)h_{u_n}^2(|x|)}{|x|^2} 
\leq C e^4 T^6.
\end{align}
Hence by \eqref{checasino}, \eqref{stimac}, \eqref{stima1} and \eqref{stima2}, we have
\begin{equation*}
C \|u_{n}\|^2 +o_n(1)\|u_n\|\leq A_1 + C_1 e^4 T^6
\end{equation*}
and, for $T$ sufficiently large, by \eqref{lassurdo}, we have 
\begin{equation*}
C \|u_{n}\|^2 +o_n(1)\|u_n\|\geq  CT^2-T.
\end{equation*}
Therefore 
\[
CT^2-T\leq A_1 + C_1 e^4 T^6,
\]
which gives a contradiction if $e = e(T)$ is sufficiently small and for large $T$.
\end{proof}

We can now conclude the proof of our main result.

\begin{proof}[Proof of Theorem \ref{main}]
Let $\bar T $ and $e^*$ as in Proposition \ref{pr:T} and fix $0<e<e^*$. Let $\{u_n\}_n:=\{(u_{e,\bar T})_n\}_n $ the corresponding Palais-Smale sequence of $J_{e,\bar T}$ at level $c_{e,\bar T}$. By Proposition \ref{pr:T}, we deduce that, for $n$ sufficiently large, $J_{e,\bar T}(u_n)=J_{e}(u_n)$ and $\{u_n\}_n$ is bounded in $\Hr$.
Therefore, there exists $u_0\in \Hr$ such that $u_n \weakto u_0$ weakly in $\H$.
\\
Let us show that, actually, $u_n \to u_0$ strongly in $\H$, up to a subsequence.
\\
Indeed, we can decompose $J_e':\Hr\to \Hr^*$ as
\[
J_e'(u)=\mathfrak{L}(u)+\mathfrak{K}(u),
\]
where $\mathfrak{L}$ is a bounded invertible linear operator defined as
\[
\mathfrak{L}(u)[\phi]=\frac 1{2m}\int \n u \cdot \n \phi +\omega \int u \phi
\]
and 
\begin{align*}
\mathfrak{K}(u)[\phi]&= \frac{e^4}{2m\kappa^2} \int \frac{u(|x|)\phi(|x|)h_u^2(|x|)}{|x|^2} 
+ \frac{e^4}{m\kappa^2} \int \frac{u^2(|x|)h_u(|x|)}{|x|^2} 
\left(\int_0^{|x|} s u(s)\phi(s)ds\right)
\\
&\qquad
+ \left(1+\frac{\kappa q}{2 m}\right)\int N_u u \phi
+ \frac{q}{4 m^2} \int u^3\phi.
\end{align*}
As observed in \cite[Proof of Lemma 3.3]{HHJ2014} and in \cite[Lemma 3.2]{byeon}, $\mathfrak{K}$ is a compact operator and so we infer, in a standard way, that $u_n \to u_0$ strongly in $\H$ and therefore $u_0$ is a critical point of $J_e$  at level $c_{e,\bar T}$ and so  it is nontrivial. Hence, by Lemma \ref{Lemma3.2}, we conclude that the pair $(u_0,N_{u_0})$ is a solution of \eqref{sistema}.
\end{proof}

\end{document}